  \newtheorem{thm}{Theorem}[section]
  \newtheorem{lem}[thm]{Lemma}
  \newtheorem*{thmA}{Theorem A}
  \newtheorem*{thmB}{Theorem B}
  \newtheorem*{thmC}{Theorem C}
  \newtheorem*{pro*}{Problem}
  \newtheorem*{thm1.2}{Theorem}
\newcommand{\FF}{{\mathbb{F}}}
\newcommand{\Syl}{{\operatorname{Syl}}}
  \title{Finite groups whose maximal subgroups \\of order divisible by all the primes are supersolvable}
  \author{}
  \date{}
\begin{document}
  \maketitle

 \bigskip
  \centerline{by}
  \bigskip

 \smallskip
  \centerline{Alexander Moret\'o}  \centerline{Departament
  de Matem\`atiques} \centerline{Universitat de Val\`encia}
  \centerline{46100 Burjassot. Val\`encia SPAIN} \centerline{  Alexander.Moreto@uv.es}

 \vskip 10pt

{\bf Abstract.}  We study finite groups $G$ with the property that for any subgroup $M$ maximal in $G$ whose order is divisible by all the prime divisors of $|G|$, $M$ is supersolvable. We show that any nonabelian simple group can occur as a composition factor of such a group and that, if $G$ is solvable, then the nilpotency length and the rank are arbitrarily large. On the other hand, for every prime $p$, the $p$-length of such a group is at most $1$. This answers questions proposed by V. Monakhov in {\it The Kourovka Notebook}.

{\bf AMS Subject Classification.}  20D10, 20F16

{\bf Keywords and phrases.} supersolvable subgroup, maximal subgroup, simple group, solvable group, $p$-length, Fitting height
  \vfill

  \noindent   Research  supported by Ministerio de Ciencia e Innovaci\'on PID-2019-103854GB-100, FEDER funds  and Generalitat Valenciana AICO/2020/298.

\section{Introduction}

Problem 19.55 in {\it The Kourovka Notebook} \cite{kou}, proposed by V. Monakhov,  asks the following.

\begin{pro*}
Suppose that in a finite group $G$ every maximal subgroup $M$ is supersolvable whenever $\pi(M)=\pi(G)$, where $\pi(G)$ is the set of all prime divisors of the order of $G$.
\begin{enumerate}
\item
What are the nonabelian composition factors of $G$?
\item
Determine exact upper bounds for the nilpotency length, the $p$-length and the rank of $G$ if $G$ is solvable.
\end{enumerate}
\end{pro*}

The goal of this note is to answer these questions. First, we show that any nonabelian simple group can occur as a composition factor of some group with this property. It is perhaps remarkable that we do not need the classification of finite simple groups to prove this.

\begin{thmA}
Every nonabelian simple group can occur as a composition factor of a finite group with the property that every maximal subgroup $M$ of $G$ with $\pi(M)=\pi(G)$ is supersolvable.
\end{thmA}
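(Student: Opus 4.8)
The plan is to realize $S$ as a composition factor of a group $G$ in which \emph{no} maximal subgroup has full prime support, so that the hypothesis of the theorem is satisfied vacuously. The natural first model is the affine group $G=V\rtimes S$, where $V$ is a faithful irreducible $\FF_p S$-module for a prime $p\nmid|S|$ (such a module exists for elementary, classification-free reasons: since $S$ is simple, every nontrivial irreducible constituent of the semisimple algebra $\FF_p[S]$ is automatically faithful). Here $V$ is the unique minimal normal subgroup, because $C_G(V)=V$, so the maximal subgroups split into two families: the complements to $V$, all isomorphic to $S$ and hence of support $\pi(S)\not\ni p$; and the subgroups $V\rtimes K$ with $K$ maximal in $S$, of support $\{p\}\cup\pi(K)$. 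Thus a maximal subgroup has the full support $\{p\}\cup\pi(S)$ if and only if it has the form $V\rtimes K$ with $\pi(K)=\pi(S)$. In particular, whenever $S$ has no maximal subgroup of full prime support the construction already works, and this settles a large class of simple groups at once: for example $A_5$ (maximal subgroups $A_4,D_{10},S_3$) and $\mathrm{PSL}(2,7)$ (maximal subgroups $S_4,\ 7{:}3$) have no full-support maximal subgroup.

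The hard part is precisely the simple groups that \emph{do} possess a non-supersolvable maximal subgroup of full prime support; the smallest example is $A_5<A_6$, where $\pi(A_5)=\pi(A_6)=\{2,3,5\}$, so that $V\rtimes A_5$ is a full-support, non-supersolvable maximal subgroup of $V\rtimes A_6$ and the plain affine model fails. I would first record that this obstruction cannot be removed by the cheap modifications. In a direct product $S\times F$, or in any extension in which $S$ sits as a top quotient (even with an abelian kernel below it), a full-support maximal $K$ of $S$ always lifts to a full-support — hence non-supersolvable — maximal subgroup, simply because a normally introduced new prime is inherited by every subgroup containing the normal factor. The same difficulty reappears for the wreath product $S\wr C_r$: the diagonal subgroup $\Delta(S)\times C_r\cong S\times C_r$ is maximal and of full support $\pi(S)\cup\{r\}$. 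So the genuinely new idea must be to introduce the auxiliary prime(s) through a mechanism that the subgroups covering a full-support maximal of $S$ \emph{cannot} access.

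Concretely, I would try to realize $S$ as an \emph{internal} chief factor $N/A$ (with $A\neq 1$ and $G/N\neq 1$) rather than as a top quotient, and to pair the primes so that any maximal subgroup that covers a full-support maximal of $S$ is forced to omit one of the auxiliary primes — the aim being, again, that no maximal subgroup attains full support. The point of burying $S$ is to destroy the maximality of the lift $V\rtimes K$: once $S$ is not a top quotient, the maximal subgroups of $G$ need no longer correspond to the maximal subgroups of $S$, and one has room to arrange that each maximal subgroup either misses a prime or is $r$-local and supersolvable. The main obstacle, and the technical heart of the argument, is to carry this out \emph{uniformly}: one must determine the entire maximal-subgroup lattice of the resulting group and check, for every maximal subgroup, that it either omits a prime of $\pi(G)$ or is supersolvable, the delicate step being to verify that the auxiliary primes really are inaccessible to the subgroups that see a non-supersolvable section of $S$. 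I expect this lattice analysis — not the choice of the group — to be where the real work lies, and the fact that it can be pushed through for \emph{every} $S$ by generic structural arguments, without ever classifying the maximal subgroups of $S$, is exactly the feature that makes the proof independent of the classification.
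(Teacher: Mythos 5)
Your first construction and its analysis are correct: for $p\nmid|S|$ the maximal subgroups of $V\rtimes S$ are exactly the complements (isomorphic to $S$, so missing $p$) and the subgroups $V\rtimes K$ with $K$ maximal in $S$, so the hypothesis of the theorem holds vacuously whenever $S$ has no maximal subgroup of full prime support, e.g.\ $S=A_5$ or $S=\mathrm{PSL}(2,7)$. But Theorem A claims this for \emph{every} nonabelian simple group, and for the complementary case --- your own example $S=A_6$, where $A_5$ is a full-support, non-supersolvable maximal subgroup --- your proposal produces no group at all: the final paragraph is a program (realize $S$ as an internal chief factor, then control the entire maximal-subgroup lattice), with no candidate group, no mechanism for making the auxiliary primes inaccessible, and, as you yourself say, the real work left undone. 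Since you also show that the natural burials (direct products, extensions with $S$ on top, $S\wr C_r$) all fail, nothing in the proposal indicates the program can be completed. That is the genuine gap: what you have is a correct partial result plus an honest research plan, not a proof.

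The comparison with the paper is instructive, and you should be aware of it before trying to ``finish'' along the paper's lines. The paper takes $G=S\wr A_p$ with $p$ a prime exceeding twice every prime divisor of $|S|$, and argues: if a maximal $M$ with $\pi(M)=\pi(G)$ were supersolvable, then $p$ would be the largest prime in $\pi(M)$, so its Sylow $p$-subgroup $P$ would be normal in $M$, whence $M\leq N_G(P)$; but $|N_G(P)N/N|=p(p-1)/2$ (where $N$ is the base group), so a Bertrand prime $r$ with $(p-1)/2<r<p$ cannot divide $|N_G(P)|$, a contradiction. Observe what this actually proves: every full-support maximal subgroup of $G$ is \emph{non}-supersolvable, which is the converse of the property Theorem A needs; the construction could therefore only succeed if $G$ had no full-support maximal subgroup whatsoever. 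It does have them, and this is exactly your diagonal objection resurfacing: since every prime divisor of $|S|$ is less than $p$, one has $\pi(A_p)=\pi(G)$, so \emph{any} maximal subgroup of $G$ containing the top copy of $A_p$ --- for instance your $\Delta(S)\times A_p$, which is maximal here too because $A_p$ is primitive, but even the mere existence of some maximal overgroup of $A_p$ suffices --- has full support and is certainly not supersolvable. So the point at which your proposal stalls is a real difficulty, not a routine step: the wreath-product route of the paper, as written, does not get past it either, and your skepticism about $S\wr C_r$ applies verbatim to $S\wr A_p$. Closing the $A_6$-type case requires an idea that neither your text nor this argument supplies.
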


Regarding the second part of Monakhov's question, we show that there is not any bound for nilpotency length and the rank,  but the $p$-length is at most $1$ for every prime $p$.

\begin{thmB}
There exist solvable groups of arbitrarily large nilpotency length and arbitrarily large rank with the property that  every maximal subgroup $M$ of $G$ with $\pi(M)=\pi(G)$ is supersolvable.
\end{thmB}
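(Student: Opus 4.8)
The plan is to notice first that the property under study is satisfied \emph{vacuously} by any finite group which has no maximal subgroup $M$ with $\pi(M)=\pi(G)$ at all. Hence it suffices to produce solvable groups, of arbitrarily large Fitting height and of arbitrarily large rank, in which every maximal subgroup omits at least one prime divisor of the order. The engine will be a one‑step extension which raises the Fitting height by exactly one while preserving the absence of maximal subgroups of full prime support, and which can be seeded so as to make the rank as large as we please.

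Here is the step. Let $H$ be a solvable group with a \emph{unique} minimal normal subgroup, and let $p$ be a prime with $p\nmid|H|$. The unique minimal normal subgroup of $H$ is, by minimality, an irreducible and hence cyclic module over the appropriate prime field, so $H$ has a faithful irreducible complex representation; reducing it modulo a prime above $p$ keeps it irreducible (since $p\nmid|H|$) and faithful (no nontrivial root of unity of $p'$‑order reduces to $1$ mod $p$, because $\prod_{k=1}^{m-1}(1-\zeta^{k})=m$ for a primitive $m$‑th root $\zeta$), and passing to an $\FF_p$‑form yields a faithful irreducible $\FF_pH$‑module $V$. Put $G=V\rtimes H$. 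I would then check three things. (i) $F(G)=V$: indeed $O_p(G)/V\le O_p(H)=1$ gives $O_p(G)=V$, while for $q\neq p$ we have $[O_q(G),V]\le O_q(G)\cap V=1$, hence $O_q(G)\le C_G(V)=V$ (the last equality because $V$ is faithful, so $C_H(V)=1$), forcing $O_q(G)=1$; thus the Fitting height of $G$ is one more than that of $H$. (ii) $V$ is again the unique minimal normal subgroup of $G$, since any minimal normal $N\neq V$ satisfies $[N,V]\le N\cap V=1$, so $N\le C_G(V)=V$, a contradiction. (iii) $G$ has no maximal subgroup of full prime support: if $M$ is maximal and $V\le M$ then $M/V$ is maximal in $H$ and $\pi(M)=\{p\}\cup\pi(M/V)$, which equals $\pi(G)=\{p\}\cup\pi(H)$ only when $M/V$ has full support in $H$, excluded by hypothesis; if $V\not\le M$ then $V\cap M\trianglelefteq G$ (normalized by $M$ and, $V$ being abelian, by $V$) forces $V\cap M=1$, so $|M|=|H|$ and $p\notin\pi(M)$.

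To finish I would iterate, starting from the seed $H=C_q$, a cyclic group of prime order $q$: it has a unique minimal normal subgroup (itself), Fitting height $1$, and no maximal subgroup of full support (its only proper subgroup is trivial). One application of the step, choosing $p$ so that the order of $p$ modulo $q$ is as large as we wish (for instance $p$ a primitive root mod $q$), produces the Frobenius group $V\rtimes C_q\cong\FF_p^{\,q-1}\rtimes C_q$; since it contains the elementary abelian group $V$ of rank $q-1$, and each of its other subgroups is a submodule of the irreducible module $V$ or is cyclic of order $q$, its rank is exactly $q-1$, which we make as large as we please by enlarging $q$. Applying the step a further $n-2$ times, each time with a brand‑new prime, yields a solvable group of Fitting height $n$ and of rank at least $q-1$ which has no maximal subgroup of full prime support, hence satisfies the property; letting $n$ and $q$ tend to infinity gives Theorem B.

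The one point that needs care is the bookkeeping behind the modules: one must confirm that the ``unique minimal normal subgroup'' condition really is inherited at every stage — which is precisely point (ii) above — so that a faithful irreducible module over each newly chosen prime genuinely exists, and that reduction modulo that prime preserves faithfulness. Once this is in place the three verifications (i)--(iii) are routine, and the control of the rank is immediate from the dimension of the bottom module.
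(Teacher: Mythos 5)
Your construction is essentially the one in the paper: an iterated split extension $G_{n+1}=V_{n+1}\rtimes G_n$ by a faithful irreducible module over a brand-new prime, with the key observation that the hypothesis of the theorem then holds \emph{vacuously} because no maximal subgroup has full prime support. The argument is correct, and where you deviate from the paper the deviations are sound and in some respects cleaner. Two points of comparison. First, the paper arranges the modules via its Lemma \ref{cons}, choosing $p$ with $\exp(G_n)\mid p-1$ so that $\FF_p$ is a splitting field, and then deduces that the ranks of the $V_n$ grow by citing the bound on the nilpotency length of linear groups in terms of the dimension (Manz--Wolf); you instead allow any $p\nmid|H|$, descend to an $\FF_p$-form by restriction of scalars, and control the rank explicitly and elementarily at the very first step by taking $p_2$ to be a primitive root mod $p_1$, so that $V_2\cong\FF_{p_2}^{\,p_1-1}$. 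This decouples the rank from the Fitting height and avoids the external citation. Second, for the absence of maximal subgroups of full support the paper gives a direct argument with Hall $\pi$-subgroups and the chain $N_k=V_k\cdots V_{n+1}$, whereas you run a one-step induction with the standard dichotomy $V\le M$ versus $V\cap M=1$; both work, and yours is arguably the more transparent. The only places where you are terse are the invocation of Gasch\"utz's criterion (unique minimal normal subgroup implies a faithful irreducible character) and the passage to an irreducible $\FF_p$-form of an absolutely irreducible module in characteristic $p\nmid|H|$; both are standard facts, and the paper is no more detailed on the first of them.
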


\begin{thmC}
Let $G$ be a solvable group such that for every maximal subgroup $M$ of $G$ such that $\pi(M)=\pi(G)$, $M$ is supersolvable. Then the $p$-length of $G$ is $1$ for every prime divisor $p$ of $|G|$.
\end{thmC}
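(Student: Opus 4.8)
I would argue by minimal counterexample, first recording that the hypothesis is inherited by quotients: if $N\trianglelefteq G$ and $\bar M=M/N$ is maximal in $G/N$ with $\pi(\bar M)=\pi(G/N)$, then $M$ is maximal in $G$, and any prime in $\pi(G)\setminus\pi(G/N)$ divides $|N|$ to its full power in $|G|$, hence lies in $\pi(M)$ because $N\le M$; so $\pi(M)=\pi(G)$, $M$ is supersolvable, and therefore so is $\bar M$. Consequently, a minimal counterexample $G$ (solvable, satisfying the hypothesis, with $l_p(G)\ge 2$ for a fixed $p$, of least order) satisfies $l_p(G/N)\le1$ for every $1\ne N\trianglelefteq G$. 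Standard reductions then give $O_{p'}(G)=1$ and $\Phi(G)=1$ (using that $p$-length is Frattini-invariant when $O_{p'}(G)=1$), so that $V:=O_p(G)=F(G)=C_G(V)$ is elementary abelian and $G=V\rtimes\bar G$ with $\bar G\cong G/V$ acting faithfully on $V$. From $l_p(G)=1+l_p(G/V)$ and $l_p(G/V)\le1$ we get $l_p(G)=2$ and $l_p(G/V)=1$, hence $\bar W:=O_{p'}(G/V)\ne1$ and $p\mid|\bar G|$. Moreover, applying minimality to $G/C_V(\bar N)$ forces $C_V(\bar N)\in\{1,V\}$ for every $\bar N\trianglelefteq G/V$, so every nontrivial normal subgroup of $\bar G$ acts fixed-point-freely on $V$; in particular $C_V(\bar W)=1$, and $\bar G$ has no nontrivial normal $p$-subgroup acting on $V$ other than through unipotent elements, a strong constraint.

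\textbf{Producing a bad maximal subgroup.} Let $X\trianglelefteq G$ be the preimage of $O_{p',p}(G/V)$. Then $O_{p'}(X)=1$ and $O_p(X)=V$ (the latter is characteristic in $X\trianglelefteq G$), and since $l_p(G/V)=1$ one checks $|X|_p=|G|_p$, so $V$ is not a Sylow $p$-subgroup of $X$ and $l_p(X)=2$. As a supersolvable group has $p$-length at most $1$, $X$ is \emph{not} supersolvable, and neither is any subgroup of $G$ containing $X$. Now pad $X$ with a Hall complement: choosing a Hall $\rho$-subgroup $\bar D$ of $\bar G$ for $\rho=\pi(\bar G)\setminus\pi(O_{p',p}(\bar G))$, the subgroup $Y:=V\cdot(O_{p',p}(\bar G))\bar D$ has $l_p(Y)=2$ and $\pi(Y)=\pi(G)$. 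If $Y<G$, any maximal subgroup $M\supseteq Y$ has $\pi(M)=\pi(G)$ and contains the non-supersolvable group $Y$, hence is not supersolvable — contradicting the hypothesis.

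\textbf{The obstacle.} The argument above fails precisely when $Y=G$, i.e.\ when $\bar W$ is a Hall subgroup of $\bar G$ (so $W:=V\bar W\trianglelefteq G$ and $G/W$ has order prime to $|\bar W|$); the extreme and hardest sub-case is $G/W$ a $p$-group. This residual case is where the real work lies, and I expect it to be the main difficulty. One handles it by exploiting index-$p$ maximal subgroups: writing $P=V\bar P_0\in\Syl_p(G)$ (with $\bar P_0\in\Syl_p(\bar G)$), for each maximal subgroup $\bar P_1$ of the $p$-group $\bar P_0$ the subgroup $W\bar P_1$ is maximal in $G$ of index $p$ with $\pi(W\bar P_1)=\pi(G)$, hence supersolvable; since $O_{p'}(W\bar P_1)\le C_G(V)=V$ is trivial, supersolvability forces its Sylow $p$-subgroup $V\bar P_1$ to be normal in $W\bar P_1$, i.e.\ $\bar W$ normalizes $V\bar P_1$. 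If $\bar P_0$ is non-cyclic, then $P$ is the join of two such $V\bar P_1$'s, so $\bar W\le N_G(P)$ and $P\trianglelefteq G$, contradicting $l_p(G)=2$; if $\bar P_0$ is cyclic the same reasoning collapses it to $|\bar P_0|=p$, and one finishes by analyzing $N_G(P)$ together with the fixed-point-free action of $\bar W$ on $V$ (equivalently, on the relevant Frobenius-type section). The general case $Y=G$ with $G/W$ not a $p$-group is reduced to this $p$-group-on-top situation by factoring out the Hall $p'$-part on top.

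\textbf{Remarks on where care is needed.} Two points demand attention throughout: that subgroups do \emph{not} inherit the hypothesis (only quotients do), so the induction must be run through quotients and through carefully chosen maximal subgroups rather than arbitrary subgroups; and that "non-supersolvable'' is genuinely weaker than "$l_p\ge2$'' (e.g.\ $A_4$ inside $S_4$), so the contradiction must be phrased as producing a non-supersolvable maximal subgroup with $\pi(M)=\pi(G)$ — which is exactly what the subgroup $Y$ (or, in the residual case, the index-$p$ subgroup $W\bar P_1$ together with $N_G(P)$) delivers.
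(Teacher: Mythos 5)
Your minimal-counterexample setup, the structural reduction to $G=V\rtimes\bar G$ with $V=O_p(G)=C_G(V)$ the unique minimal normal subgroup, and your first step are all sound: the subgroup $Y=X\bar D$ satisfies $\pi(Y)=\pi(G)$ and contains the non-supersolvable preimage $X$ of $O_{p',p}(G/V)$, so whenever $Y<G$ any maximal overgroup violates the hypothesis. This is a genuinely different idea from the paper's. The gap is the residual case $Y=G$, which you correctly flag as "where the real work lies" but do not close. Two steps fail there. First, the reduction of the general case to the sub-case where $G/W$ is a $p$-group "by factoring out the Hall $p'$-part on top" is not a legitimate move: that Hall subgroup $\bar D$ is not normal in $G$, so there is nothing to factor out, and (as you yourself stress) you cannot instead pass to the subgroup $V\,O_{p',p}(\bar G)$, since subgroups do not inherit the hypothesis; nor does the index-$p$ argument transport, because a $\bar D$-invariant maximal subgroup of $\bar P_0$ need not exist. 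Second, even the terminal sub-case $|\bar P_0|=p$ is only gestured at ("one finishes by analyzing $N_G(P)$\dots"); no argument is given, and $\pi(N_G(P))=\pi(G)$ is not clear, so the hypothesis need not apply to any maximal subgroup containing $N_G(P)$.

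The missing ingredient --- which also makes your entire case division unnecessary --- is that the complement $\bar G\cong G/V$ is itself a maximal subgroup of $G$ (it complements the unique minimal normal subgroup $V=C_G(V)$) with $\pi(\bar G)=\pi(G)$, hence supersolvable; and that $UV$ is proper with $\pi(UV)=\pi(G)$ for $U$ a Hall $p'$-subgroup of $\bar G$, so $LV$ is supersolvable for $L=O_{p'}(\bar G)$. This is exactly what the paper exploits: supersolvability of $LV$ splits $V$ into $L$-invariant subgroups of order $p$, embedding $L$ (which acts faithfully on $V$) into a direct product of copies of the cyclic group of order $p-1$, so every prime dividing $|L|$ is less than $p$; on the other hand $O_p(\bar G)=1$ together with the Sylow-tower property of the supersolvable $\bar G$ forces its largest prime to exceed $p$ and hence to divide $|L|$ --- a contradiction with no case analysis at all. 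The same observation closes your terminal sub-case in two lines: there $W=V\bar W$ is maximal of index $p$ with $\pi(W)=\pi(G)$ and $O_{p'}(W)=1$, so supersolvability of $W$ forces every prime of $\bar W$ below $p$, while supersolvability of the maximal subgroup $\bar G$ with $O_p(\bar G)=1$ forces a prime larger than $p$ in $\pi(\bar W)$. In short: the skeleton is right and the non-residual case is a nice alternative argument, but as written the proof is incomplete precisely where you predicted the difficulty would be.
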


Note that groups of $p$-length at most $1$ for every prime $p$ are known to have  a number of properties (see, for instance, VI.6 of \cite{hup}).

\section{Arbitrary groups}

In this section we prove Theorem A. We start by recalling two well-known lemmas.
  
  \begin{lem}
 \label{nor} 
  Let $G$ be a finite group and $N\trianglelefteq G$. If $P$ is a Sylow $p$-subgroup of $G$,  then $N_{G/N}(PN/N)=N_G(P)N/N$.
  \end{lem}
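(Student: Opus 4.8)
The plan is to prove the two inclusions separately, the nontrivial one being a Frattini-type argument.

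First, the inclusion $N_G(P)N/N \subseteq N_{G/N}(PN/N)$ is immediate: if $g \in N_G(P)$, then $g$ normalizes $P$, hence $gN$ normalizes $PN/N$, since $(PN/N)^{gN} = P^g N / N = PN/N$. For the reverse inclusion, I would take $x \in G$ with $xN \in N_{G/N}(PN/N)$; unravelling this says exactly that $x$ normalizes the subgroup $PN$ of $G$ (note $PN$ is a subgroup because $N \trianglelefteq G$). The key observation is that $P$ is a Sylow $p$-subgroup of $PN$: indeed $|P|$ is the full $p$-part of $|G|$ and $P \le PN \le G$, so $|P|$ is also the $p$-part of $|PN|$. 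Consequently $P^x$ is again a Sylow $p$-subgroup of $(PN)^x = PN$, and Sylow's theorem applied inside $PN$ produces an element $c \in PN$ with $(P^x)^c = P$, that is, $xc \in N_G(P)$.

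It then remains to conclude that $x \in N_G(P)N$. Write $c^{-1} = pn$ with $p \in P$ and $n \in N$ (possible since $PN = P \cdot N$). Then $x = (xc)c^{-1} = \big((xc)p\big)\, n$, and $(xc)p \in N_G(P)$ because $xc \in N_G(P)$ and $p \in P \le N_G(P)$. Hence $x \in N_G(P)N$, so $xN \in N_G(P)N/N$, completing the proof.

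There is no genuine obstacle here; this is the classical Frattini argument, and the only point deserving a little care is the coset bookkeeping in the final step — choosing the decomposition $c^{-1} = pn$ rather than $c^{-1}=np$ so that the $P$-part can be absorbed into $N_G(P)$ on the left and one lands precisely in $N_G(P)N$.
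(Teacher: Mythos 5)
Your proof is correct and complete: it is the classical Frattini argument, which is precisely the standard proof of this fact. The paper itself states the lemma as well known and omits any proof, so there is nothing to compare beyond noting that your argument — including the careful choice of the decomposition $c^{-1}=pn$ so that the $P$-part is absorbed into $N_G(P)$ — is exactly the intended one.
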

  
  \begin{lem}
\label{siz} 
  Let $A_p$ be the alternating group on $p$ letters, where $p$ is a prime. Let $P\in\Syl_p(A_p)$. Then $|N_{A_p}(P)|=\frac{p-1}{2}p$.
  \end{lem}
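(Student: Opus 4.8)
The plan is to identify $N_{A_p}(P)$ with a well-understood subgroup of $A_p$. Let $P \in \Syl_p(A_p)$; since $p \mid |A_p| = p!/2$ exactly once, $P$ is cyclic of order $p$, generated by a $p$-cycle, say $\sigma = (1\,2\,\cdots\,p)$. The first step is to compute $N_{S_p}(P)$ and then intersect with $A_p$. A $p$-cycle on $p$ points has support equal to all of $\{1,\dots,p\}$, so conjugation by $P$ is transitive on $\{1,\dots,p\}$, and one recognizes $\langle \sigma\rangle$ as the translation subgroup inside the affine group $\mathrm{AGL}_1(p) = \ZZ/p \rtimes (\ZZ/p)^\times$ acting on $\ZZ/p$; the normalizer $N_{S_p}(P)$ is exactly this affine group, of order $p(p-1)$. (The standard argument: any $g \in N_{S_p}(P)$ satisfies $g\sigma g^{-1} = \sigma^k$ for some $k \in (\ZZ/p)^\times$, and the map $g \mapsto k$ is a homomorphism $N_{S_p}(P) \to (\ZZ/p)^\times$ with kernel $C_{S_p}(P) = P$; surjectivity is witnessed by the maps $x \mapsto mx$ on $\ZZ/p$.)

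Next I would intersect with $A_p$. Since $N_{A_p}(P) = N_{S_p}(P) \cap A_p$ has index $1$ or $2$ in $N_{S_p}(P)$, it suffices to exhibit one odd permutation in $N_{S_p}(P)$. The element $x \mapsto mx$ where $m$ is a primitive root mod $p$ generates $N_{S_p}(P)/P \cong (\ZZ/p)^\times$; as a permutation of $\{1,\dots,p\}$ it fixes $0$ and permutes the remaining $p-1$ points in a single $(p-1)$-cycle, hence has sign $(-1)^{p-2} = -1$ (for $p$ odd; the case $p=2$ is trivial since then $A_2$ is trivial and $\frac{p-1}{2}p = 1$). Therefore $N_{S_p}(P) \not\subseteq A_p$, the index is exactly $2$, and $|N_{A_p}(P)| = \frac{1}{2}p(p-1) = \frac{p-1}{2}p$.

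The main (and really only) obstacle is the sign computation: one must be careful that the generator of the complement $(\ZZ/p)^\times$ acts as a single $(p-1)$-cycle on the nonzero residues — this is exactly the statement that $(\ZZ/p)^\times$ is cyclic, generated by a primitive root — so that its sign is $(-1)^{p-2}$. Everything else is the routine identification of $N_{S_p}(P)$ with $\mathrm{AGL}_1(p)$.
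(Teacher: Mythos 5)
Your proof is correct; the paper states this lemma without proof as a well-known fact, and your argument (identify $N_{S_p}(P)$ with $\mathrm{AGL}_1(p)$ of order $p(p-1)$, then observe that a generator of the cyclic complement acts as a $(p-1)$-cycle of sign $(-1)^{p-2}=-1$, so exactly half of the normalizer lies in $A_p$) is the standard one. The only point worth a half-line of justification is $C_{S_p}(P)=P$, which follows since a $p$-cycle has exactly $(p-1)!$ conjugates in $S_p$.
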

  
  Now, we prove Theorem A.
  
  \begin{proof}[Proof of Theorem A]
  Let $q$ be bigger than  the largest prime divisor of $|S|$ and let $p$ be a prime such that $p>2q$. Note that this implies that $\frac{p-1}{2}\geq q$.
  Let $G=S\wr H$, where $H=A_p$ permutes transitively $p$ copies of $S$. Let $P\in\Syl_p(H)$ so that $P$ is also a Sylow $p$-subgroup of $G$. Let $N=S\times\cdots\times S$ be the base group. By Lemmas \ref{nor} and \ref{siz},
  $$
  |N_G(P)N/N|=|N_{G/N}(PN/N)|=|N_{A_p}(P)|=\frac{p-1}{2}p.
  $$
  This implies that if $r$ is a prime such that $q\leq\frac{p-1}{2}<r<p$, then $r$ does not divide $|N_G(P)|$. Notice that by Bertrand's Postulate, such a prime exists. 
  
  Now, we will show that $G$ does not have any maximal supersolvable subgroup of order divisible by all the primes in $\pi(G)$. By way of contradiction, let $M$ be such a maximal supersolvable subgroup. Since $p$ divides $|M|$, we may assume that $P\leq M$. Since $M$ is supersolvable, $p$ is the largest prime divisor of $|M|$, and $P\in\Syl_p(M)$, $P\trianglelefteq M$. But then $M\leq N_G(P)$, so all the prime divisors of $|G|$ divide $|N_G(P)|$. This is a contradiction.
  \end{proof}
  
  \section{Solvable groups}

 We start with the proof of Theorem B. The key to our construction is the following well-known lemma.

  \begin{lem}
  \label{cons}
  Let $G$ be a finite (complex) linear group of degree $n$.  Let $p$ be a prime such that $e=\exp(G)$ divides $p-1$.  Then $G$ acts faithfully and irreducibly  on an elementary abelian group $V$ of order $p^n$.   
\end{lem}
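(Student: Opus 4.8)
The plan is to realize the given complex representation over the ring $\ZZ[\zeta_e]$, reduce it modulo a prime ideal above $p$, and then exploit the coprimality of $p$ and $|G|$ to see that faithfulness and irreducibility are preserved. Write $\rho\co G\hookrightarrow\mathrm{GL}_n(\mathbb{C})$ for the defining representation (which we take to be irreducible, as the conclusion requires), let $\chi$ be its character, and let $\zeta_e$ be a primitive $e$th root of unity. Two elementary remarks set the stage. First, $p\nmid|G|$: every prime dividing $|G|$ divides $\exp(G)=e$, and since $e\mid p-1$ we have $p\nmid e$, hence $p\nmid|G|$. Second, $\FF_p^\times$ is cyclic of order $p-1$ and $e\mid p-1$, so $\FF_p$ already contains a primitive $e$th root of unity.

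First I would descend to a number ring. By the Brauer splitting-field theorem $\Q(\zeta_e)$ is a splitting field for $G$, so $\rho$ is realizable over $\Q(\zeta_e)$; the $\ZZ[\zeta_e]$-span of all $G$-translates of $\ZZ[\zeta_e]^n$ is then a $G$-stable lattice $L$ of rank $n$ affording $\rho$. Fix a maximal ideal $\mathfrak{m}$ of $\ZZ[\zeta_e]$ containing $p$. Since $p\nmid e$, the residue field $k=\ZZ[\zeta_e]/\mathfrak{m}$ has characteristic $p$ and still contains a primitive $e$th root of unity, and its degree over $\FF_p$ equals the multiplicative order of $p$ modulo $e$, which is $1$ because $e\mid p-1$; thus $k=\FF_p$. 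Now $V:=L/\mathfrak{m}L$ is an $\FF_p$-vector space of dimension $n$ equipped with an $\FF_p$-linear $G$-action, that is, a homomorphism $\bar\rho\co G\to\mathrm{GL}(V)\cong\mathrm{GL}_n(\FF_p)$, and as an additive group $V$ is elementary abelian of order $p^n$.

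It remains to transfer the two properties across the reduction. For faithfulness: if $g\in\ker\bar\rho$ then the reductions modulo $\mathfrak{m}$ of the eigenvalues $\lambda_1,\dots,\lambda_n$ of $\rho(g)$ — which are $e$th roots of unity — are all equal to $1$, the only eigenvalue of $\bar\rho(g)=\mathrm{id}$; since $X^e-1$ is separable mod $p$, reduction is injective on $e$th roots of unity, so $\lambda_i=1$ for every $i$, whence $\rho(g)=\mathrm{id}$ and $g=1$. For irreducibility: because $p\nmid|G|$ the algebra $\FF_pG$ is semisimple, and both $\FF_p$ (characteristic $p$) and $\Q(\zeta_e)$ (characteristic $0$) are splitting fields for $G$, so the decomposition map is a dimension-preserving bijection between the irreducible $\mathbb{C}G$-modules and the irreducible $\FF_pG$-modules; hence the irreducible $\rho$ reduces to an irreducible module, namely the action on $V$. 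Equivalently, the Brauer character of $\bar\rho$ is $\chi$ itself, and $\langle\chi,\chi\rangle=1$ together with the orthonormality of irreducible Brauer characters (valid since $p\nmid|G|$) forces $\bar\rho$ to be irreducible.

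The only step that is not bookkeeping is this last comparison of the ordinary and $p$-modular representation theories of $G$, resting on the Brauer splitting-field theorem and, crucially, on the fact that for $p\nmid|G|$ reduction mod $p$ sends irreducibles to irreducibles without fusion (which would fail for $p\mid|G|$). The remaining ingredients — coprimality, the residue field being exactly $\FF_p$, the existence of a $G$-stable lattice, and recognizing $V$ as an elementary abelian $p$-group — are routine, and that is where I would simply take care with the details.
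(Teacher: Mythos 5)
Your proof is correct and follows essentially the same route as the paper's: observe that $p\nmid|G|$ and that $\FF_p$ contains a primitive $e$th root of unity (so $\FF_p$ is a splitting field, Isaacs Cor.~9.15), then use the fact that for $p\nmid|G|$ reduction modulo $p$ carries the irreducible complex character to an irreducible Brauer character (Isaacs Thm.~15.13); you simply unpack these citations via an explicit $\ZZ[\zeta_e]$-lattice reduction. Your explicit eigenvalue argument for faithfulness is a nice touch that the paper leaves implicit.
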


\begin{proof}
Since  $\FF_p$ contains a primitive $\exp(G)$th root of unity, Corollary 9.15 of \cite{isa} implies that $\FF_p$ is a splitting field for $G$.  By hypothesis, $G$ has a faithful (complex) irreducible character $\chi$ of degree $n$ and since $p$ does not divide $|G|$,  Theorem 15.13 of \cite{isa} implies that $\chi$ is also an irreducible  $p$-Brauer character. Since  $\FF_p$ is a splitting field for $G$, the natural module for this character is an elementary abelian group of order $p^n$. The result follows.
\end{proof}

\begin{proof}[Proof of Theorem B]
Now, let $G_1=V_1$ be a cyclic group of order $p_1$ for some prime $p_1$. By Lemma \ref{cons}, $G_1$ acts faithfully and irreducibly on  an elementary abelian $p_2$-group $V_2$ (of order $p_2$, in this case). Put $G_2=G_1\ltimes V_2$. As before, $G_2$ acts faithfully and irreducibly on some elementary abelian $p_3$-group $H_3$ and we put $G_3=G_2\ltimes V_3$. Inductively, we define $G_{n+1}=G_n\ltimes V_{n+1}=V_1\dots V_{n+1}$ for $n\geq1$. Notice that since $V_n$ is the unique minimal normal subgroup of $G_n$ for every $n\geq 1$, $G_n$ has faithful irreducible characters and these groups do exist by Lemma \ref{cons}.

Note that the nilpotency length of $G_n$ is $n$ for every $n$. Also, the rank of $V_n$ goes to infinity when $n$ goes to infinity (for instance, because the nilpotency length of a linear group over a finite field is bounded in terms of the dimension. See Theorem 3.9(b) of \cite{mw}).  

It remains to see that the groups $G_n$ satisfy the hypothesis of the question. Clearly, we may assume $n>2$.  Note that $|G_n|_{p_i}=p_i$ if and only if $i\leq 2$. Thus, if $M$ is a maximal subgroup of $G$ such that  $\pi(M)=\pi(G)$, then $|M|_{\{p_1,p_2\}}=p_1p_2=|G|_{\{p_1,p_2\}}$. Thus if $p_j$ is the prime divisor of $|G:M|$, $j\geq3$.  Put $N_k=V_k\dots V_{n+1}$ for every $k$ so that $N_{j+1}<M\cap N_j<N_j$ (the first inequality is strict because $p_j$ divides $|M|$). Put $H_j=V_1\dots V_{j-1}$ so that $G=H_j\ltimes N_j$. Set $\pi=\{p_1,\dots,p_{j-1}\}$ and notice that $|H_j|=|M|_{\pi}$. Let $H$ be a Hall $\pi$-subgroup of $M$ and note that $H$ and $H_j$ are conjugate so  $G=H\ltimes N_j$ and the action of $H$ on $N_j/N_{j+1}$ is irreducible. But $M\cap N_j$ is $H$-invariant. This is a contradiction. This means that $\pi(M)\neq\pi(G)$ for every $M$ maximal in $G$, and we are done. 
\end{proof}

Finally, we prove that the $p$-length is bounded.

%\begin{lem}
%\label{her}
%Let $G$ be a finite solvable group. Assume that  every maximal subgroup of $G$  whose order is divisible by all the prime divisors of $|G|$ is supersolvable. Let $N$ be a normal Hall subgroup of $G$. Then  every maximal subgroup of $N$ whose order is divisible by all the prime divisors of $|N|$ is supersolvable.
%\end{lem}

%\begin{proof}
%Let $H4 be a complement for 

\begin{proof}[Proof of Theorem C]
Notice that the hypothesis is inherited by quotients. Thus, if $G$ is a minimal counterexample, we may assume that $l_p(G)=2$ but $l_p(G/N)=1$ for every nontrivial normal subgroup $N$ of $G$. By VI.6.9 of \cite{hup}, for instance,  $O_{p'}(G)=1$, $V=O_p(G)$ is elementary abelian  and is the unique minimal normal subgroup of $G$ and $G=HV$ for some subgroup $H$. Since $l_p(G)=2$, $p$ divides $|H|$ so by hypothesis $H$ is supersolvable. 
Notice that $O_p(H)=1$. Let $L=O_{p'}(H)$, $K/L=O_p(H/L)$ and let $U$ be a Hall $p'$-subgroup of $H$. Notice that $\pi(UV)=\pi(G)$ so by hypothesis $LV\leq UV$ is supersolvable. Write $V=V_1\times\cdots\times V_t$ with $V_i$ cyclic of order $p$ for every $i$ and $L$-invariant. Thus $L$ is isomorphic to a subgroup of the direct product of $t$ copies of the cyclic group of order $p-1$. In particular, all prime divisors of $|L|$ are less than $p$. 
Now, since $H$ is supersolvable and $K/L$ acts faithfully on $L$, we have a contradiction.
\end{proof}

\end{document}